\documentclass[11pt]{amsart}
\usepackage[T1]{fontenc}
\usepackage[utf8]{inputenc}
\usepackage{lmodern}
\usepackage[a4paper,margin=32mm]{geometry}
\usepackage{amsmath,amssymb,amsfonts,amsthm,mathtools}
\usepackage{enumitem}
\usepackage{hyperref}
\usepackage[T1]{fontenc}
\usepackage[utf8]{inputenc}
\usepackage{lmodern}
\usepackage{microtype}
\usepackage{amsmath,amssymb,amsthm,mathtools,mathrsfs}
\usepackage{geometry}
\usepackage{enumitem}
\usepackage{hyperref}
\usepackage[nameinlink,noabbrev]{cleveref}

\hypersetup{colorlinks=true,linkcolor=blue,citecolor=blue,urlcolor=blue}
\numberwithin{equation}{section}

\DeclareMathOperator{\spec}{spec}
\newtheorem{theorem}{Theorem}[section]
\newtheorem{lemma}[theorem]{Lemma}
\newtheorem{proposition}[theorem]{Proposition}

\theoremstyle{remark}
\newtheorem{remark}[theorem]{Remark}

\begin{document}
\title[Arithmetic means of Walsh--Fourier partial sums]{Almost Everywhere
Convergence of Arithmetic Means of Walsh--Fourier Partial Sums Along
Subsequences}
\author{ Ushangi Goginava}
\address{U. Goginava, Department of Mathematical Sciences \\
	United Arab Emirates University, P.O. Box No. 15551\\
	Al Ain, Abu Dhabi, UAE}
\email{zazagoginava@gmail.com; ugoginava@uaeu.ac.ae}
\date{}

\begin{abstract}
Let $S_m f$ denote the $m$-th partial sum of the Walsh-Fourier series of $f \in L^1$. For an increasing sequence $a=(a(n))_{n \geq 1}$ of positive integers, consider the arithmetic means

$$
\sigma_N f:=\frac{1}{N} \sum_{n=1}^N S_{a(n)} f .
$$

Gát proved in 2019 that $\sigma_N f \rightarrow f$ almost everywhere for every $f \in L^1$ under the growth condition

$$
a(n+1) \geq\left(1+\frac{1}{n^\delta}\right) a(n), \quad 0<\delta<\frac{1}{2} .
$$

We show that the same conclusion remains valid throughout the full range $0<\delta<1$.
\end{abstract}

\subjclass[2020]{42C10, 42A24}
\keywords{Walsh--Fourier series, subsequences of partial sums, arithmetic
means, almost everywhere convergence}
\maketitle

\section{Introduction}

Let $\omega =(\omega _{n})_{n\geq 0}$ be the Walsh--Paley system on $\mathbb{%
I}=[0,1)$, and let $S_{n}f$ denote the $n$th partial sum of the
Walsh--Fourier series of $f\in L^{1}(\mathbb{I})$. The classical Walsh
analogue of the Fej\'{e}r--Lebesgue theorem states that the full arithmetic
means 
\begin{equation*}
\frac{1}{N}\sum_{n=1}^{N}S_{n}f
\end{equation*}%
converge almost everywhere to $f$ for every $f\in L^{1}(\mathbb{I})$; see
Fine~\cite{Fine} and Schipp--Wade--Simon--P\'{a}l~\cite{SchippBook}. A
natural refinement, going back to Zalcwasser's work for the trigonometric
system~\cite{Zalcwasser}, asks how sparse a subsequence of partial sums may
be while the corresponding arithmetic means still reconstruct every
integrable function almost everywhere. For trigonometric Fourier series, G\'at 
\cite{GatConstr} proved that the arithmetic means of the partial sums taken
along a lacunary subsequence converge almost everywhere for every integrable
function. On the other hand, in \cite{GatAnn} he answered a question posed
by Zalcwasser  \cite{Zalcwasser} by constructing a subsequence and an
integrable function for which the arithmetic means diverge almost everywhere
along this subsequence. We also refer the reader to recent papers \cite%
{GogMukh,GogOniani} on the divergence of Fourier series along subsequences.

For a strictly increasing sequence of positive integers $a=(a(n))_{n\ge1}$,
define 
\begin{equation}
\sigma_N f:=\frac1N\sum_{n=1}^N S_{a(n)}f,\qquad N\in\mathbb{N}.
\label{eq:sigma-def}
\end{equation}
If $a$ is lacunary, then the almost everywhere convergence $\sigma_N f\to f$
for all $f\in L^1(\mathbb{I})$ was proved by G\'at~\cite{Gat2010}. Later,
G\'at showed in~\cite{Gat2019} that the same conclusion remains valid
whenever 
\begin{equation}
a(n+1)\ge \left(1+\frac1{n^{\delta}}\right)a(n)\qquad (n\in\mathbb{N})
\label{eq:growth-old}
\end{equation}
for some $0<\delta<1/2$.

The purpose of this note is to extend the conclusion of \cite{Gat2019} from the range $0< \delta<1 / 2$ to the full range $0<\delta<1$.

\begin{theorem}
\label{thm:main} Let $f\in L^1(\mathbb{I})$, and let $a=(a(n))_{n\ge1}$ be
an increasing sequence of positive integers such that 
\begin{equation}
a(n+1)\ge \left(1+\frac1{n^{\delta}}\right)a(n)\qquad (n\in\mathbb{N})
\label{eq:growth}
\end{equation}
for some $0<\delta<1$. Then 
\begin{equation*}
\lim_{N\to\infty}\frac1N\sum_{n=1}^N S_{a(n)}f(x)=f(x) \qquad\text{for
almost every }x\in\mathbb{I}. 
\end{equation*}
\end{theorem}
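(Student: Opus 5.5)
The plan is the standard route to an a.e. convergence theorem: density plus a maximal inequality. For Walsh polynomials $P$ we have $S_{a(n)}P=P$ once $a(n)$ exceeds the degree of $P$, so $\sigma_N P\to P$ everywhere. Walsh polynomials are dense in $L^1(\mathbb{I})$. It therefore suffices to prove the weak type $(1,1)$ inequality
\begin{equation*}
\lambda\,\bigl|\{x\in\mathbb{I}:\sigma^* f(x)>\lambda\}\bigr|\le C\|f\|_1,\qquad \sigma^*f:=\sup_N|\sigma_N f|.
\end{equation*}
We will obtain this by a Calderón--Zygmund decomposition of $f$ at height $\lambda$ on dyadic intervals. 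The good part $g$ is handled through the $L^2$ bound $\|\sigma^*g\|_2\le C\|g\|_2$, which follows from the Carleson--Hunt type theorem for Walsh series, since $\sigma^* g\le \sup_m|S_m g|$. The bad part is $b=\sum_j b_j$, where each $b_j$ is supported on a dyadic interval $I_j$ of length $2^{-k_j}$, has mean zero, and satisfies $\|b_j\|_1\lesssim\lambda|I_j|$. We remove the union of the enlarged intervals $\bigcup_j I_j^{*}$, whose measure is $\lesssim\|f\|_1/\lambda$, and estimate $\sigma^* b$ in $L^1$ off this set.

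The kernel of $\sigma_N$ is $K_N=\frac1N\sum_{n\le N}D_{a(n)}$. We write $a(n)=\sum_s \varepsilon_s(n)2^s$ and use the standard decomposition
\begin{equation*}
D_{m}=\omega_m\sum_{s}\varepsilon_s(m)\,r_s D_{2^s}.
\end{equation*}
For $b_j$ with mean zero on $I_j$, the dyadic blocks with $2^s\le 2^{k_j}$ contribute nothing outside $I_j$. Only the high bits $s\ge k_j$ matter, and those terms are oscillatory in $x$ through the factor $\omega_{a(n)}$. The goal is then the following off-support estimate, uniform in $k$ and in every mean-zero $b$ supported on a dyadic interval $I$ of length $2^{-k}$:
\begin{equation*}
\int_{\mathbb{I}\setminus I^*}\sup_N\Bigl|\frac1N\sum_{n\le N}\int_I b(t)\,D_{a(n)}(x\dotplus t)\,dt\Bigr|\,dx\le C\|b\|_1 .
\end{equation*}
We will split the range of $N$ according to the relation between $a(N)$ and $2^k$. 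For $N$ with $a(N)<2^k$ all terms vanish off $I$. For $N$ with $a(N)\ge 2^k$, let $n_k$ be the first index with $a(n_k)\ge 2^k$. The terms with $n\le n_k$ are bounded crudely, and for the remaining terms we exploit cancellation among the phases $\omega_{a(n)}(x)$.

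The main obstacle is precisely this cancellation estimate for $\frac1N\sum_{n_k<n\le N}\omega_{a(n)}(x)\,r_s(x)D_{2^s}(x\dotplus t)$. Gát's argument in \cite{Gat2019} uses a Cauchy--Schwarz/$L^2$ orthogonality bound on the distinct frequencies. That bound costs a factor of roughly $\sqrt{\#\{n: a(n)\in[2^s,2^{s+1})\}}$ per dyadic block. Under \eqref{eq:growth} the number of $n$ with $a(n)$ in a given dyadic block near $n$ is about $n^\delta$, and this loss is summable only when $\delta<1/2$.

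To reach the full range $\delta<1$ we will instead group the indices $n$ by dyadic blocks of $a(n)$. Within a block where the leading bits of $a(n)$ agree, $D_{a(n)}$ splits into a common part plus a part of lower order. We will then use the Walsh--Fejér (Fine) maximal inequality to bound averages of Dirichlet kernels over consecutive residues, rather than the $L^2$ bound. The point is that $N$ counts all blocks up to level $\log a(N)$. Since \eqref{eq:growth} gives $\log a(N)\gtrsim N^{1-\delta}$, a block at level $s$ contains $\lesssim s^{\delta/(1-\delta)}$ indices, while $N\approx(\log a(N))^{1/(1-\delta)}$. Hence the weight of any single block in $\sigma_N$ is $\lesssim s^{\delta/(1-\delta)}/N$. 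Summing these weights against the $L^1(\mathbb{I}\setminus I^*)$ norms of the block kernels, which decay like $2^{k-s}$ times a Fejér-type bound, should give a geometric series dominated by $C\|b\|_1$ for every $\delta<1$. Verifying this summation, in particular that the polynomial block sizes are absorbed by the exponential decay $2^{k-s}$ and the normalization $1/N$, is the step requiring the most care.
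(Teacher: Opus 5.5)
There is a genuine gap, and it sits in the step you flag as delicate. The off-support estimate you set as your goal, $\int_{\mathbb{I}\setminus I^*}\sup_N|\sigma_N b|\le C\|b\|_1$ uniformly in $k$, is false for some sequences satisfying \eqref{eq:growth}. Your heuristic for it also has the kernel picture reversed. If $t\in I=I_k(t_0)$ and $x\notin I$, then $x\dotplus t\notin I_k$, so $D_{2^s}(x\dotplus t)=0$ for every $s\ge k$: the high bits contribute nothing off $I$. What survives are the low bits. For $x\in I_s(t_0)\setminus I_{s+1}(t_0)$ with $s<k$,
\begin{equation*}
S_mb(x)=\omega_m(x)\bigl((m\bmod 2^s)-m_s2^s\bigr)\widehat b(m).
\end{equation*}
The mean-zero property does not kill $\widehat b(m)$ once $m\ge 2^k$, because $\omega_m$ oscillates on $I$. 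So each of the roughly $k$ annuli contributes on the order of $\|b\|_1$ per index, with no $2^{k-s}$ decay.

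The normalization $1/N$ cannot compensate, because \eqref{eq:growth} only bounds growth from below. It gives $N\lesssim(\log a(N))^{1/(1-\delta)}$, not $N\approx(\log a(N))^{1/(1-\delta)}$. Concretely, let $a(n)=2^{2^n}+\sum_{s<2^n,\ s\text{ even}}2^s$ (ratio at least $2$, so \eqref{eq:growth} holds), $k=2^{n_0}$, and $b=2^kr_k\mathbf{1}_{I_k}$. Then $\|b\|_1=1$, $\int b=0$, and $\widehat b=\mathbf{1}_{[2^k,2^{k+1})}$. Since $a(n)<2^k$ for $n<n_0$, one gets exactly $\sup_N|\sigma_Nb|=|D_{a(n_0)}|/n_0$ off $I_k$. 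Moreover $|D_{a(n_0)}|\ge 2^s/3$ on $I_s\setminus I_{s+1}$ for every $s<k$. Hence
\begin{equation*}
\int_{\mathbb{I}\setminus I^*}\sup_N|\sigma_Nb|\gtrsim\frac{k}{n_0}=\frac{2^{n_0}}{n_0}\longrightarrow\infty .
\end{equation*}
Your ``crude'' bound for the single term $n=n_k$ already breaks down here. The function $\sup_N|\sigma_Nb|\approx 1/(n_0|x|)$ is uniformly in weak $L^1$ but not in $L^1$, so no atom-by-atom $L^1$ treatment of the bad part can succeed.

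The missing idea is to drop per-atom $L^1$ bounds and work in $L^2$ after a stopping time, so that orthogonality captures cancellation across all atoms and all $n$ at once. This is what the paper does.

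\begin{itemize}
\item It writes $S_nf=\mathbb{E}_{|n|+1}f-\omega_n\mathbb{E}_{m(n)}(f\omega_n)+\sum_i f*d_{n,i}$. The first two terms are dominated by $f^*$, and $-f*d_{n,i}$ is the Fourier projection of $f$ onto the pairwise disjoint blocks $B(n,i)\subset[2^{|n|},2^{|n|+1})$.
\item Each block is truncated by the $\mathcal{A}_i$-measurable indicator $\mathbf{1}_{\{\nu_\lambda>i\}}$, which preserves its spectrum. G\'at's stopped square estimate then gives $\|X_n\|_2^2\le C\lambda\|f\|_1$. This $L^2$ bound, not a kernel estimate off $I^*$, replaces your Calder\'on--Zygmund step.
\item On the shell $2^m\le N<2^{m+1}$, indices congruent modulo $R_m\asymp 2^{m\delta}$ have lacunary values $a(n)$ by \eqref{eq:growth}. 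So the corresponding $X_n$ are martingale differences in disjoint spectral shells.
\item Cauchy--Schwarz over the $R_m$ classes and Doob's inequality give $\bigl\|\sup_{2^m\le N<2^{m+1}}|T_N^{(\lambda)}f|^2\bigr\|_1\lesssim (R_m/2^m)\lambda\|f\|_1$. This is summable in $m$ exactly when $\delta<1$.
\end{itemize}

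Your counting, that a dyadic block of values carries weight about $n^\delta/N$, is the right bookkeeping, but it pays off only when combined with this orthogonality. Your Carleson--Hunt argument for the good part is valid but unnecessary.
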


The proof follows the general strategy of~\cite{Gat2019}, but with a
different decomposition of the index set. Instead of grouping the averages
in intervals of length $N^{2\delta}$, we work on dyadic shells $2^m\le
N<2^{m+1}$ and split each shell into residue classes modulo $R_m\asymp
2^{m\delta}$. The growth assumption then recovers lacunarity inside each
residue class, while the prefactor $R_m/2^m$ leads to a summable series $%
\sum_m 2^{-m(1-\delta)}$, which is precisely why the argument works for
every $\delta<1$.

\section{Walsh preliminaries and a kernel decomposition}

We follow the notation of~\cite{Gat2019,SchippBook}. Write 
\begin{equation*}
\mathbb{I}=[0,1),\qquad x=\sum_{j=0}^{\infty}x_j2^{-j-1},\qquad
n=\sum_{j=0}^{\infty}n_j2^j, 
\end{equation*}
where $x_j,n_j\in\{0,1\}$. For dyadic rationals we choose the terminating
expansion. Dyadic addition is defined by 
\begin{equation*}
x\dotplus y:=\sum_{j=0}^{\infty}\lvert x_j-y_j\rvert2^{-j-1}. 
\end{equation*}
For $x\in\mathbb{I}$ and $m\in\mathbb{N}$, let 
\begin{equation*}
I_m(x):=\{y\in\mathbb{I}:y_0=x_0,\dots,y_{m-1}=x_{m-1}\},\qquad I_m:=I_m(0). 
\end{equation*}
Let $\mathcal{A}_m$ be the $\sigma$-algebra generated by the dyadic
intervals of rank $m$, and let $\mathbb{E}_m$ be the corresponding
conditional expectation.

The Rademacher functions are $r_j(x)=(-1)^{x_j}$, and the Walsh--Paley
functions are 
\begin{equation*}
\omega_n(x):=\prod_{j=0}^{\infty} r_j(x)^{n_j}. 
\end{equation*}
The Dirichlet and Fej\'er kernels are given by 
\begin{equation*}
D_n:=\sum_{k=0}^{n-1}\omega_k, \qquad K_n:=\frac1n\sum_{k=1}^n D_k. 
\end{equation*}
For $f\in L^1(\mathbb{I})$, the $n$th partial sum is 
\begin{equation*}
S_n f(y):=\sum_{k=0}^{n-1}\widehat f(k)\omega_k(y)  =\int_{\mathbb{I}}
f(x\dotplus y)D_n(x)\,dx. 
\end{equation*}
We also use the dyadic convolution 
\begin{equation*}
(f*g)(y):=\int_{\mathbb{I}}f(x\dotplus y)g(x)\,dx. 
\end{equation*}
It is well known that $S_{2^m}f=\mathbb{E}_m f$.

For $n\ge1$, set 
\begin{equation*}
|n|:=\max\{j\in\mathbb{N}:n_j\ne0\}, \qquad m(n):=\min\{j\in\mathbb{N}%
:n_j=1\}, 
\end{equation*}
and for $j\ge0$ define the upper truncation 
\begin{equation*}
n^{(j)}:=\sum_{k=j}^{\infty} n_k2^k. 
\end{equation*}
For an $L^2$ function $h$, let 
\begin{equation*}
\spec(h):=\{k\in\mathbb{N}:\widehat h(k)\ne0\} 
\end{equation*}
denote its Walsh spectrum.

For $n\ge1$ and $0\le i<|n|$, define 
\begin{equation}
\lambda_{n,i}:= 
\begin{cases}
1, & \text{if }m(n)\le i<|n|\text{ and }n_i=0, \\ 
0, & \text{otherwise,}%
\end{cases}
\label{eq:lambda-def}
\end{equation}
and 
\begin{equation}
d_{n,i}:=\lambda_{n,i}\,\omega_n\bigl(D_{2^i}-D_{2^{i+1}}\bigr).
\label{eq:dni-def}
\end{equation}
The next proposition gives the decomposition needed below.

\begin{proposition}
\label{prop:kernel-decomp} For every $n\ge1$, 
\begin{equation}
D_n = D_{2^{|n|+1}}-\omega_n D_{2^{m(n)}}+\sum_{i=0}^{|n|-1}d_{n,i}.
\label{eq:kernel-decomp}
\end{equation}
Consequently, 
\begin{equation}
S_n f =\mathbb{E}_{|n|+1}f-\omega_n\mathbb{E}_{m(n)}(f\omega_n)+\widetilde
S_n f, \qquad \widetilde S_n f:=\sum_{i=0}^{|n|-1} f*d_{n,i}.
\label{eq:partial-sum-decomp}
\end{equation}
\end{proposition}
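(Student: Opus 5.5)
The plan is to start from the classical Paley-type representation of the Walsh--Dirichlet kernel,
\begin{equation*}
D_n=\omega_n\sum_{i=0}^{|n|} n_i\, r_i\, D_{2^i},
\end{equation*}
which is standard (see \cite{SchippBook}). If I want a self-contained argument, I will check it by induction on the number of nonzero binary digits of $n$. The inductive step uses $D_{2^k+j}=D_{2^k}+r_kD_j$ for $0\le j<2^k$, together with the facts that $\omega_n=r_{|n|}\,\omega_{n-2^{|n|}}$ and that $r_k D_{2^k}=\omega_{2^k}D_{2^k}$.

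Next I will rewrite this sum using the one-step identity
\begin{equation*}
D_{2^{i+1}}=(1+r_i)D_{2^i},\qquad\text{i.e.}\qquad D_{2^i}-D_{2^{i+1}}=-r_iD_{2^i}.
\end{equation*}
The digits $n_i$ vanish for $i<m(n)$ and $n_{|n|}=1$. I therefore split the range $m(n)\le i\le |n|$ into the indices with $n_i=1$ and those with $n_i=0$, and write
\begin{equation*}
\sum_{i=0}^{|n|} n_i r_iD_{2^i}=\sum_{i=m(n)}^{|n|} r_iD_{2^i}-\sum_{\substack{m(n)\le i<|n|\\ n_i=0}} r_iD_{2^i}.
\end{equation*}
The first sum telescopes:
\begin{equation*}
\sum_{i=m(n)}^{|n|} r_iD_{2^i}=D_{2^{|n|+1}}-D_{2^{m(n)}}.
\end{equation*}
In the second sum, $-r_iD_{2^i}=D_{2^i}-D_{2^{i+1}}$, and the restriction on $i$ is exactly the indicator $\lambda_{n,i}$ of \eqref{eq:lambda-def}. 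Multiplying through by $\omega_n$ then gives
\begin{equation*}
D_n=\omega_nD_{2^{|n|+1}}-\omega_nD_{2^{m(n)}}+\sum_{i=0}^{|n|-1}d_{n,i}.
\end{equation*}
To finish \eqref{eq:kernel-decomp}, it remains to remove the factor $\omega_n$ in front of $D_{2^{|n|+1}}$. Since $n<2^{|n|+1}$, the function $\omega_n$ depends only on $x_0,\dots,x_{|n|}$. It therefore equals $1$ on $I_{|n|+1}$, which is the support of $D_{2^{|n|+1}}$, so $\omega_nD_{2^{|n|+1}}=D_{2^{|n|+1}}$.

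For \eqref{eq:partial-sum-decomp}, I convolve \eqref{eq:kernel-decomp} with $f$, using $S_kf=f*D_k$ and $f*D_{2^k}=\mathbb E_kf$. The only nontrivial term is $f*(\omega_nD_{2^{m(n)}})$. Here I use the character property $\omega_n(x\dotplus y)=\omega_n(x)\omega_n(y)$, which gives $\omega_n(x)=\omega_n(x\dotplus y)\,\omega_n(y)$. Hence
\begin{equation*}
\int_{\mathbb I} f(x\dotplus y)\,\omega_n(x)D_{2^{m(n)}}(x)\,dx=\omega_n(y)\,(f\omega_n)*D_{2^{m(n)}}(y)=\omega_n(y)\,\mathbb E_{m(n)}(f\omega_n)(y).
\end{equation*}
The remaining terms give $\widetilde S_nf$ by definition.

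I expect the main obstacle to be only bookkeeping. The point to watch is the endpoint conventions: that $n_{m(n)}=1$ and $n_{|n|}=1$, so that these endpoints are correctly excluded from the $\lambda_{n,i}$ sum, and that the sign in $D_{2^i}-D_{2^{i+1}}$ matches the sign in front of the zero-digit terms. The case where $n$ is a power of $2$ (so $m(n)=|n|$) should be checked separately. In that case the sum of the $d_{n,i}$ is empty and the identity reduces to $D_{2^k}=D_{2^{k+1}}-r_kD_{2^k}$, which is the one-step identity above.
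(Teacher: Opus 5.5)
Your proposal is correct and follows essentially the same route as the paper. Your Paley formula $D_n=\omega_n\sum_i n_i r_iD_{2^i}$ is the paper's starting identity $D_n=\omega_n\sum_i n_i(D_{2^{i+1}}-D_{2^i})$ rewritten via $r_iD_{2^i}=D_{2^{i+1}}-D_{2^i}$. From there you split off the zero digits, telescope, use $\omega_n\equiv1$ on $I_{|n|+1}$, and evaluate $f*(\omega_nD_{2^{m(n)}})$ by the character property, exactly as the paper does. The inductive check of the Paley formula and the separate power-of-$2$ case are harmless extras.
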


\begin{proof}
A standard identity for the Walsh Dirichlet kernel is 
\begin{equation}
D_n=\omega_n\sum_{i=0}^{|n|}n_i\bigl(D_{2^{i+1}}-D_{2^i}\bigr).
\label{eq:dir-standard}
\end{equation}
Since $n_i=0$ for $i<m(n)$, $n_{|n|}=1$, and $n_i=1-\lambda_{n,i}$ for $%
m(n)\le i<|n|$, we obtain 
\begin{equation*}
\sum_{i=0}^{|n|}n_i\bigl(D_{2^{i+1}}-D_{2^i}\bigr)
= \sum_{i=m(n)}^{|n|}\bigl(D_{2^{i+1}}-D_{2^i}\bigr)
- \sum_{i=0}^{|n|-1}\lambda_{n,i}\bigl(D_{2^{i+1}}-D_{2^i}\bigr). 
\end{equation*}
The first sum telescopes to $D_{2^{|n|+1}}-D_{2^{m(n)}}$. Multiplying by $%
\omega_n$ and using that $\omega_n\equiv1$ on the support of $D_{2^{|n|+1}}$%
, namely on $I_{|n|+1}$, gives 
\begin{equation*}
D_n = D_{2^{|n|+1}}-\omega_n D_{2^{m(n)}}
+\sum_{i=0}^{|n|-1}\lambda_{n,i}\,\omega_n\bigl(D_{2^i}-D_{2^{i+1}}\bigr), 
\end{equation*}
which is exactly \eqref{eq:kernel-decomp}.

Convolving \eqref{eq:kernel-decomp} with $f$ yields 
\begin{equation*}
S_n f= S_{2^{|n|+1}}f-(f*(\omega_nD_{2^{m(n)}}))+
\sum_{i=0}^{|n|-1}f*d_{n,i}. 
\end{equation*}
Now $S_{2^{|n|+1}}f=\mathbb{E}_{|n|+1}f$, and a standard Walsh character calculation shows that 
\begin{equation*}
(f*(\omega_nD_{2^{m(n)}}))(y)=\omega_n(y)\,\mathbb{E}_{m(n)}(f\omega_n)(y). 
\end{equation*}
Hence \eqref{eq:partial-sum-decomp} follows.
\end{proof}

For the sequence $a=(a(n))_{n\ge1}$, put $A(n):=|a(n)|$ and define 
\begin{align}
\sigma_N^0 f&:=\frac1N\sum_{n=1}^N \mathbb{E}_{A(n)+1}f,
\label{eq:sigma0-def} \\
\rho_N f&:=\frac1N\sum_{n=1}^N \omega_{a(n)}\mathbb{E}_{m(a(n))}(f%
\omega_{a(n)}),  \label{eq:rho-def} \\
\widetilde\sigma_N f&:=\frac1N\sum_{n=1}^N \widetilde S_{a(n)}f.
\label{eq:sigma-tilde-def}
\end{align}
Then \eqref{eq:partial-sum-decomp} becomes 
\begin{equation}
\sigma_N f=\sigma_N^0 f-\rho_N f+\widetilde\sigma_N f.
\label{eq:sigma-splitting}
\end{equation}

We use the dyadic maximal operator 
\begin{equation*}
f^*(x):=\sup_{m\ge0}\mathbb{E}_m\lvert f\rvert(x). 
\end{equation*}
Since $\lvert \mathbb{E}_m g\rvert\le \mathbb{E}_m\lvert g\rvert$, we have 
\begin{equation}
\sup_{N\ge1}\lvert \sigma_N^0 f\rvert+\sup_{N\ge1}\lvert \rho_N f\rvert \le
2f^*.  \label{eq:max-first-pieces}
\end{equation}
We shall also use the classical weak-type estimate for the dyadic maximal
function (see, for instance,~\cite{SchippBook}) 
\begin{equation}
\lvert \{x\in\mathbb{I}:f^*(x)>\lambda\}\rvert \le \frac{C}{\lambda}\lVert
f\rVert_1 \qquad (\lambda>0).  \label{eq:dyadic-maximal-weak}
\end{equation}

\section{Frequency blocks}

For $n\ge1$ and $0\le i<|n|$, define the block 
\begin{equation}
B(n,i):=\bigl\{n^{(i+1)}+2^i+r:0\le r<2^i\bigr\}.  \label{eq:block-def}
\end{equation}

\begin{lemma}
\label{lem:block-form} Let $n\ge1$ and $0\le i<|n|$.

\begin{enumerate} [label=(\alph*)]

\item If $\lambda_{n,i}=0$, then $d_{n,i}=0$.

\item If $\lambda_{n,i}=1$, then 
\begin{equation}
d_{n,i}=-\omega_{n^{(i+1)}+2^i}D_{2^i}
=-\sum_{r=0}^{2^i-1}\omega_{n^{(i+1)}+2^i+r}.  \label{eq:dni-block}
\end{equation}
In particular, 
\begin{equation}
\spec(d_{n,i})=B(n,i)\subset [2^{|n|},2^{|n|+1}).  \label{eq:dni-spec}
\end{equation}

\item If $0\le i<j<|n|$ and $\lambda_{n,i}=\lambda_{n,j}=1$, then 
\begin{equation}
\max B(n,i)<\min B(n,j).  \label{eq:blocks-disjoint}
\end{equation}
Hence the nonzero blocks $B(n,i)$ are pairwise disjoint.
\end{enumerate}
\end{lemma}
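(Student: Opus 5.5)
Part (a) follows at once from \eqref{eq:dni-def}: $d_{n,i}$ carries the factor $\lambda_{n,i}$, so it vanishes when $\lambda_{n,i}=0$. The substance is in (b) and (c). We will prove them by a direct Walsh character computation using two standard facts: $\omega_k\omega_l=\omega_{k\oplus l}$, where $\oplus$ is bitwise addition without carry, and $D_{2^i}=\sum_{r<2^i}\omega_r$.

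\textbf{Part (b).} Assume $\lambda_{n,i}=1$, so that $m(n)\le i<|n|$ and $n_i=0$.

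First, we rewrite the difference of Dirichlet kernels. Since $D_{2^{i+1}}=D_{2^i}(1+r_i)$, we have
\begin{equation*}
D_{2^i}-D_{2^{i+1}}=-r_iD_{2^i}=-\omega_{2^i}D_{2^i}.
\end{equation*}
Hence $d_{n,i}=-\omega_{n\oplus 2^i}D_{2^i}$.

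Next, we split $n=n^{(i+1)}+n_i2^i+\sum_{k<i}n_k2^k$. Using $n_i=0$, this gives $n\oplus 2^i=n^{(i+1)}+2^i+q$, where $q:=\sum_{k<i}n_k2^k<2^i$.

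The key observation is that $\omega_q$ is constant $1$ on $I_i$, because $q$ involves only the Rademachers $r_0,\dots,r_{i-1}$. Since $D_{2^i}=2^i\mathbf 1_{I_i}$ is supported on $I_i$, we get $\omega_qD_{2^i}=D_{2^i}$. Alternatively, $\{q\oplus r: r<2^i\}=\{0,\dots,2^i-1\}$ yields the same conclusion. Either way, the low part can be dropped, and
\begin{equation*}
d_{n,i}=-\omega_{n^{(i+1)}+2^i}D_{2^i}.
\end{equation*}

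Expanding $D_{2^i}$ gives
\begin{equation*}
-\sum_{r<2^i}\omega_{(n^{(i+1)}+2^i)\oplus r}.
\end{equation*}
Since $n^{(i+1)}+2^i$ has no bits below position $i$, the operation $\oplus r$ coincides with $+r$. This proves \eqref{eq:dni-block}.

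Finally, the spectrum: all $2^i$ coefficients equal $-1\ne 0$ and the indices are distinct, so $\spec(d_{n,i})=B(n,i)$. Since $i<|n|$, every element of $B(n,i)$ has leading bit $|n|$, i.e., it lies in $[2^{|n|},2^{|n|+1})$.

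\textbf{Part (c).} Let $i<j<|n|$ with both $\lambda$'s equal to $1$.

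The largest element of $B(n,i)$ is $n^{(i+1)}+2^{i+1}-1$. The smallest element of $B(n,j)$ is $n^{(j+1)}+2^j$.

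To compare them, write $n^{(i+1)}=n^{(j+1)}+n_j2^j+\sum_{i<k<j}n_k2^k$ and use $n_j=0$. This gives
\begin{equation*}
\max B(n,i)=n^{(j+1)}+\sum_{i<k<j}n_k2^k+2^{i+1}-1\le n^{(j+1)}+(2^j-2^{i+1})+2^{i+1}-1=n^{(j+1)}+2^j-1,
\end{equation*}
which is strictly less than $\min B(n,j)$. This proves \eqref{eq:blocks-disjoint}.

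Pairwise disjointness of the nonzero blocks follows, since each block is an integer interval and they are ordered by $i$.

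The only delicate step is the absorption of the low bits $q$ in (b), i.e., checking that $\omega_q$ is trivial on the support of $D_{2^i}$; everything else is bookkeeping with binary digits.
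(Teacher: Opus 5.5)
Your proof is correct and follows essentially the paper's argument. The only difference is one of order in (b): you first write $D_{2^i}-D_{2^{i+1}}=-r_iD_{2^i}$ and then absorb the low-order character $\omega_q$ on $I_i$, whereas the paper absorbs $\omega_{n-n^{(i+1)}}$ on the supports first; your part (c) is the same digit comparison as the paper's.
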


\begin{proof}
Part~(a) is immediate from \eqref{eq:dni-def}. If $\lambda_{n,i}=1$, then $%
n_i=0$ and $n-n^{(i+1)}<2^i$. Hence $\omega_{n-n^{(i+1)}}\equiv1$ on the
support of both $D_{2^i}$ and $D_{2^{i+1}}$, so 
\begin{equation*}
d_{n,i}=\omega_{n^{(i+1)}}(D_{2^i}-D_{2^{i+1}}). 
\end{equation*}
Using $D_{2^{i+1}}=D_{2^i}+\omega_{2^i}D_{2^i}$, we get 
\begin{equation*}
d_{n,i}=-\omega_{n^{(i+1)}}\omega_{2^i}D_{2^i}
=-\omega_{n^{(i+1)}+2^i}D_{2^i}, 
\end{equation*}
which is \eqref{eq:dni-block}. Expanding $D_{2^i}=\sum_{r=0}^{2^i-1}\omega_r$
yields \eqref{eq:dni-spec}.

For part~(c), assume $i<j$ and $\lambda_{n,i}=\lambda_{n,j}=1$. Since $n_j=0$%
, 
\begin{equation*}
n^{(i+1)}=n^{(j+1)}+\sum_{k=i+1}^{j-1}n_k2^k \le
n^{(j+1)}+\sum_{k=i+1}^{j-1}2^k = n^{(j+1)}+2^j-2^{i+1}. 
\end{equation*}
Therefore 
\begin{equation*}
\max B(n,i)=n^{(i+1)}+2^{i+1}-1\le n^{(j+1)}+2^j-1< n^{(j+1)}+2^j=\min
B(n,j), 
\end{equation*}
as claimed.
\end{proof}

\begin{lemma}
\label{lem:block-stability} Let $n\ge1$ and $0\le i<|n|$. If $\phi$ is $%
\mathcal{A}_i$-measurable and $h\in L^2(\mathbb{I})$ satisfies $\spec%
(h)\subset B(n,i)$, then 
\begin{equation*}
\spec(\phi h)\subset B(n,i). 
\end{equation*}
\end{lemma}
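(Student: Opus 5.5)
The plan is to reduce to individual Walsh functions. Both the hypothesis and the conclusion are linear in $h$, and $h$ is a finite linear combination of $\omega_k$ with $k\in B(n,i)$. So it suffices to show that $\spec(\phi\,\omega_k)\subset B(n,i)$ for every $k\in B(n,i)$. Likewise $\phi$ is $\mathcal{A}_i$-measurable, so it is constant on dyadic intervals of rank $i$. The span of such functions is exactly the span of $\{\omega_s:0\le s<2^i\}$, since both spaces have dimension $2^i$ and each $\omega_s$ with $s<2^i$ depends only on $x_0,\dots,x_{i-1}$. Hence $\phi=\sum_{s<2^i}c_s\omega_s$, a finite sum, and it is enough to treat a product $\omega_s\omega_k$ with $s<2^i$ and $k\in B(n,i)$.

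For the key step, use the multiplicative rule $\omega_s\omega_k=\omega_{s\oplus k}$, where $\oplus$ is bitwise XOR of the binary digits. This follows directly from the definition $\omega_n=\prod_j r_j^{n_j}$ together with $r_j^2=1$. Write $k=n^{(i+1)}+2^i+r$ with $0\le r<2^i$. The digits of $k$ in positions $\ge i$ are those of $n^{(i+1)}+2^i$, and they are not touched by XOR with $s<2^i$. The low part becomes $r\oplus s$, which again lies in $[0,2^i)$. Therefore
\begin{equation*}
s\oplus k=n^{(i+1)}+2^i+(r\oplus s)\in B(n,i).
\end{equation*}
By linearity, every Walsh coefficient of $\phi h$ outside $B(n,i)$ vanishes, which gives $\spec(\phi h)\subset B(n,i)$.

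The only point needing care is the first step: justifying that $\mathcal{A}_i$-measurable functions are exactly the Walsh polynomials of degree $<2^i$. This rests on the dimension count, or equivalently on $\mathbb{E}_i=S_{2^i}$, which is already noted in the paper. Once that identification is in place, the rest is routine bit manipulation. Because $\phi$ is bounded and $h\in L^2$, the product lies in $L^2$ and its spectrum is well defined.
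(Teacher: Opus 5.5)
Your proof is correct and follows the same route as the paper: you identify $\mathcal{A}_i$-measurable functions with Walsh polynomials whose spectrum lies in $[0,2^i)$, then use $\omega_s\omega_k=\omega_{s\oplus k}$ to see that multiplication changes only the lowest $i$ binary digits, which leaves $B(n,i)$ invariant. You also fill in details the paper leaves implicit, namely that $h$ is a finite combination of $\omega_k$ with $k\in B(n,i)$ and the dimension count for $\phi$.
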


\begin{proof}
Every $\mathcal{A}_i$-measurable function is a Walsh polynomial with
spectrum contained in $[0,2^i)$. Since multiplication of Walsh characters
corresponds to dyadic addition of the indices, multiplying by $\phi$ can
only change the lowest $i$ binary digits. The elements of $B(n,i)$ have the
form $n^{(i+1)}+2^i+r$ with $0\le r<2^i$, so changing the lowest $i$ digits
leaves the set $B(n,i)$ invariant. Hence $\spec(\phi h)\subset B(n,i)$.
\end{proof}

For $\lambda>0$, define the stopping time 
\begin{equation}
\nu_\lambda(x):=\inf\bigl\{m\in\mathbb{N}:\mathbb{E}_m(\lvert
f\rvert)(x)>\lambda\bigr\}, \qquad \inf\emptyset:=+\infty.
\label{eq:stopping-time}
\end{equation}
Since $\{\nu_\lambda>i\}=\bigcap_{m=0}^i\{\mathbb{E}_m(\lvert
f\rvert)\le\lambda\}\in\mathcal{A}_i$, the next lemma follows from %
\cref{lem:block-form,lem:block-stability}.

\begin{lemma}
\label{lem:Xn-spectrum} For $n\ge1$, set 
\begin{equation}
X_n^{(\lambda)}:=\sum_{i=0}^{|n|-1}\mathbf{1}_{\{\nu_\lambda>i\}}%
\,(f*d_{n,i}).  \label{eq:Xn-def}
\end{equation}
Then 
\begin{equation}
\spec\bigl(X_n^{(\lambda)}\bigr)\subset [2^{|n|},2^{|n|+1}),
\label{eq:Xn-shell}
\end{equation}
and the summands in \eqref{eq:Xn-def} are pairwise orthogonal in $L^2(%
\mathbb{I})$. In particular, 
\begin{equation}
\lVert X_n^{(\lambda)}\rVert_2^2 =\sum_{i=0}^{|n|-1}\lVert \mathbf{1}%
_{\{\nu_\lambda>i\}}\,(f*d_{n,i})\rVert_2^2.  \label{eq:Xn-orth}
\end{equation}
Moreover, $X_n^{(\lambda)}$ is $\mathcal{A}_{|n|+1}$-measurable and $\mathbb{%
E}_{|n|}X_n^{(\lambda)}=0$.
\end{lemma}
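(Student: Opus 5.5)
The argument will go term by term: fix $n\ge1$ and $\lambda>0$, and for each $0\le i<|n|$ look at the summand $Y_i:=\mathbf{1}_{\{\nu_\lambda>i\}}(f*d_{n,i})$ of \eqref{eq:Xn-def}.

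\textbf{Spectrum of each summand.} If $\lambda_{n,i}=0$, then $Y_i=0$ by \cref{lem:block-form}(a). If $\lambda_{n,i}=1$, \cref{lem:block-form}(b) gives $\spec(d_{n,i})=B(n,i)$. Since $\widehat{f*g}(k)=\widehat f(k)\widehat g(k)$ for the dyadic convolution, $f*d_{n,i}$ is a Walsh polynomial with $\spec(f*d_{n,i})\subset B(n,i)$; in particular it lies in $L^2$ even though $f$ is only in $L^1$. The indicator $\phi=\mathbf{1}_{\{\nu_\lambda>i\}}$ is $\mathcal{A}_i$-measurable, as noted before the lemma. \cref{lem:block-stability} then yields $\spec(Y_i)\subset B(n,i)\subset[2^{|n|},2^{|n|+1})$ by \eqref{eq:dni-spec}.

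\textbf{Shell, orthogonality and Pythagoras.} Summing over $i$ gives \eqref{eq:Xn-shell}. By \cref{lem:block-form}(c), the nonzero blocks $B(n,i)$ are pairwise disjoint. Two $L^2$ functions whose spectra are disjoint are orthogonal by Parseval, so the $Y_i$ are pairwise orthogonal, and Pythagoras gives \eqref{eq:Xn-orth}. This step is the only one that needs care: we must use that each $Y_i$ is a finite Walsh polynomial, so $\spec$ is meaningful and Parseval applies, and that the zero terms with $\lambda_{n,i}=0$ are harmless.

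\textbf{Measurability and vanishing expectation.} Every Walsh polynomial with spectrum in $[0,2^{|n|+1})$ is $\mathcal{A}_{|n|+1}$-measurable, since each $\omega_k$ with $k<2^{|n|+1}$ depends only on $x_0,\dots,x_{|n|}$. Hence $X_n^{(\lambda)}$ is $\mathcal{A}_{|n|+1}$-measurable. Finally, $\mathbb{E}_{|n|}=S_{2^{|n|}}$ keeps only the frequencies below $2^{|n|}$. The spectrum of $X_n^{(\lambda)}$ lies in $[2^{|n|},2^{|n|+1})$, so $\mathbb{E}_{|n|}X_n^{(\lambda)}=0$.
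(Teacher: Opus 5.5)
Your proof is correct and follows the paper's argument essentially step for step. You bound the summand spectra via \cref{lem:block-form}(b) and \cref{lem:block-stability}, get orthogonality from the disjoint blocks in \cref{lem:block-form}(c), and read off the measurability and $\mathbb{E}_{|n|}X_n^{(\lambda)}=0$ from the shell $[2^{|n|},2^{|n|+1})$. Your extra remarks only spell out what the paper leaves implicit: that $f*d_{n,i}$ is a Walsh polynomial (hence in $L^2$ even for $f\in L^1$), and that $\mathbb{E}_{|n|}=S_{2^{|n|}}$.
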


\begin{proof}
For each $i$, the function $f*d_{n,i}$ has spectrum contained in $\spec%
(d_{n,i})\subset B(n,i)$ by \eqref{eq:dni-spec}. Since $\mathbf{1}%
_{\{\nu_\lambda>i\}}$ is $\mathcal{A}_i$-measurable, %
\cref{lem:block-stability} gives 
\begin{equation*}
\spec\bigl(\mathbf{1}_{\{\nu_\lambda>i\}}(f*d_{n,i})\bigr)\subset
B(n,i)\subset [2^{|n|},2^{|n|+1}). 
\end{equation*}
By \cref{lem:block-form}(c), the nonzero blocks $B(n,i)$ are pairwise
disjoint, which yields both \eqref{eq:Xn-shell} and \eqref{eq:Xn-orth}. The
final assertion is immediate because a Walsh polynomial with frequencies in $%
[2^{|n|},2^{|n|+1})$ is $\mathcal{A}_{|n|+1}$-measurable and has zero $%
\mathcal{A}_{|n|}$-conditional expectation.
\end{proof}

The next estimate is the key input from G\'{a}t's paper \cite{Gat2010}.

\begin{lemma}[Stopped square estimate]
\label{lem:stopped-square} Let $f\in L^1(\mathbb{I})$, $n\ge1$, and $%
\lambda>0$. Then 
\begin{equation}
\sum_{i=0}^{|n|-1}\lVert \mathbf{1}_{\{\nu_\lambda>i\}}\,(f*d_{n,i})%
\rVert_2^2 \le C\lambda\lVert f\rVert_1.  \label{eq:stopped-square}
\end{equation}
\end{lemma}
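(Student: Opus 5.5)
We need to prove the stopped square estimate: $\sum_i \|1_{\{\nu>i\}} (f*d_{n,i})\|_2^2 \le C\lambda\|f\|_1$.

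By Lemma~\ref{lem:block-form}(b), for $\lambda_{n,i}=1$ we have $f*d_{n,i} = -\omega_{n^{(i+1)}+2^i}\,\mathbb{E}_i(f\omega_{n^{(i+1)}+2^i})$. Indeed, convolution with $\omega_k D_{2^i}$ gives $\omega_k \mathbb{E}_i(f\omega_k)$, as in the proof of Proposition~\ref{prop:kernel-decomp}. Hence, writing $k_i:=n^{(i+1)}+2^i$,
\begin{equation*}
\lVert \mathbf{1}_{\{\nu_\lambda>i\}}(f*d_{n,i})\rVert_2^2=\int_{\{\nu_\lambda>i\}}\lvert\mathbb{E}_i(f\omega_{k_i})\rvert^2 .
\end{equation*}
Since $\{\nu_\lambda>i\}\in\mathcal{A}_i$, the indicator can be moved inside the conditional expectation. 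The plan is a Calderón--Zygmund-type decomposition adapted to the stopping time. Set $g:=f\mathbf{1}_{\{\nu_\lambda=\infty\}}+\sum_{m}\mathbb{E}_m(f)\mathbf{1}_{\{\nu_\lambda=m\}}$ and $b:=f-g$. One checks that $\mathbb{E}_m(|f|)\le\lambda$ at level $m-1$ gives $\mathbb{E}_m|f|\le 2\lambda$ on $\{\nu_\lambda=m\}$. Hence $|g|\le 2\lambda$ a.e. (using the martingale convergence theorem on $\{\nu=\infty\}$), and $\|g\|_1\le\|f\|_1$.

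The key step is to show that on $\{\nu_\lambda>i\}$ we have $\mathbb{E}_i(f\omega_{k_i})=\mathbb{E}_i(g\omega_{k_i})$. The bad part $b$ is supported on $\{\nu_\lambda<\infty\}$, and on each atom $I$ of $\{\nu_\lambda=m\}$ it has mean zero. However, $b\omega_k$ need not have zero mean, so the argument must use $\omega_{k_i}$ more carefully. On $\{\nu>i\}$ the atom of rank $i$ containing $x$ is a union of atoms of rank $m>i$ where the stopping occurs, plus the part where $\nu=\infty$. On a stopping atom $I$ of rank $m>i$, the character $\omega_{k_i}$ is not constant, since $k_i\ge 2^i$ and has higher bits. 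Thus a naive localization fails, and the main obstacle is the interaction between the high frequency $k_i$ and the stopping atoms of rank above $i$.

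The fix is to avoid localizing $b$ and instead use orthogonality across $i$. Write
\begin{equation*}
\int_{\{\nu>i\}}|\mathbb{E}_i(f\omega_{k_i})|^2=\int \bigl|\mathbb{E}_i\bigl(\mathbf{1}_{\{\nu>i\}}f\,\omega_{k_i}\bigr)\bigr|^2 .
\end{equation*}
For $g$, the functions $\mathbf{1}_{\{\nu>i\}}\omega_{k_i}\mathbb{E}_i(g\omega_{k_i})$ have disjoint spectra $B(n,i)$ (Lemma~\ref{lem:Xn-spectrum}). Orthogonality bounds their sum of squares by the squared norm of a projection of $g$, at most $\|g\|_2^2\le 2\lambda\|g\|_1\le2\lambda\|f\|_1$. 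Here I would verify that $\sum_i \omega_{k_i}\mathbb{E}_i(\cdot\,\omega_{k_i})$ is the orthogonal projection onto $\bigcup_i B(n,i)$, which follows from Lemma~\ref{lem:block-form}(c).

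For $b=\sum_I b_I$, with $I$ ranging over stopping atoms of rank $m_I>i$ contained in $\{\nu>i\}$, the frequency $k_i$ has bits only at positions $\ge i$. Moreover $\omega_{k_i}$ restricted to $I$ equals a constant times $\omega_{k_i \bmod 2^{m_I}}$. So $\mathbb{E}_i(b_I\omega_{k_i})$ is controlled by the Walsh coefficients of $b_I$ at frequencies below $2^{m_I}$, and only those $i<m_I$ with $\lambda_{n,i}=1$ contribute. For each atom $I$ I would bound
\begin{equation*}
\sum_{i<m_I}\bigl\|\mathbb{E}_i(b_I\omega_{k_i})\bigr\|_2^2\le C\lambda\|b_I\|_1 .
\end{equation*}
The justification is that $\mathbb{E}_i(b_I\omega_{k_i})$ is constant on the rank-$i$ atom $J\supset I$, with value $|J|^{-1}\int_I b_I\omega_{k_i}$, which is at most $|J|^{-1}\|b_I\|_1$. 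This yields $\sum_{i<m_I}|J_i|^{-1}\|b_I\|_1^2\le 2|I|^{-1}\|b_I\|_1^2\le C\lambda\|b_I\|_1$, using $\|b_I\|_1\le 4\lambda|I|$. Cross terms between different atoms $I,I'$ inside the same $J$ remain. I would handle them by writing $\sum_I$ and again using the geometric decay $|J_i|^{-1}$ together with the bound $\sum_{I\subset J}\|b_I\|_1\le 4\lambda|J|$, which is valid because $J$ is not stopped. These cross terms are the delicate part of the argument. Summing over all atoms then gives $C\lambda\|f\|_1$.
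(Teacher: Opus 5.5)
The paper gives no proof of this lemma; it is quoted from G\'at~\cite{Gat2010}. So your argument has to stand on its own, and it has a genuine gap exactly at the cross terms you flag as delicate.

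Your reduction $f*d_{n,i}=-\omega_{k_i}\mathbb{E}_i(f\omega_{k_i})$, the good-part estimate and the single-atom bound are fine. The tools you propose for the cross terms, namely the decay of $|J_i|^{-1}$ and $\sum_{I\subset J}\|b_I\|_1\le 4\lambda|J|$, are pure size bounds, and no size bound can close. The estimate $|J|^{-1}\bigl|\sum_{I\subset J}\int_I b_I\omega_{k_i}\bigr|^2\le |J|^{-1}\bigl(\sum_{I\subset J}\|b_I\|_1\bigr)^2\le 4\lambda\sum_{I\subset J}\|b_I\|_1$, summed over $J$ and $i$, gives $4\lambda\sum_I m_I\|b_I\|_1$, which loses a factor $m_I$.

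This loss is real. Take $\lambda=1$, $n=2^{|n|}+1$, $2\le M\le |n|$ and $f=\tfrac32\,\omega_{2^{|n|}}\mathbf{1}_{\{x_{M-1}=1\}}$. The stopping atoms are the rank-$M$ intervals in $\{x_{M-1}=1\}$, with $b_I=f\mathbf{1}_I$ and $\|b_I\|_1=\tfrac32|I|$. Then $|J|^{-1}\bigl(\sum_{I\subset J}\|b_I\|_1\bigr)^2$, summed over $1\le i<M$ and $J$, equals $\tfrac9{16}(M-1)$. Meanwhile $\|f\|_1=\tfrac34$, and the true left-hand side is $\tfrac9{16}$, since $f$ has only the frequencies $2^{|n|}$ and $2^{|n|}+2^{M-1}$. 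So the cancellation in the $i$-dependent signs $\omega_{k_i\bmod 2^{m_I}}(I)$ (here simply $r_i(I)$) is essential.

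Relatedly, your localization is reversed. On $I$ the \emph{constant} factor is $\omega_{k_i\bmod 2^{m_I}}$, and the varying factor is $\omega_{k_i^{(m_I)}}=\omega_{n^{(m_I)}}$, which does not depend on $i$. With your version, $\int_I b_I\omega_{k_i}$ would vanish identically, since $b_I$ has no Walsh coefficients below $2^{m_I}$.

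The missing idea is orthogonality across $i$:
\begin{itemize}
\item If $\lambda_{n,i}=1$, then $\omega_{k_i}=\omega_n\,\omega_{n\bmod 2^i}\,r_i$ with $\omega_{n\bmod 2^i}$ $\mathcal{A}_i$-measurable. Combined with $r_i\mathbb{E}_i(ur_i)=\mathbb{E}_{i+1}u-\mathbb{E}_iu$, this gives $|f*d_{n,i}|=|(\mathbb{E}_{i+1}-\mathbb{E}_i)(f\omega_n)|$.
\item The functions $\mathbf{1}_{\{\nu_\lambda>i\}}(\mathbb{E}_{i+1}-\mathbb{E}_i)(f\omega_n)$ are the increments of the stopped martingale $\mathbb{E}_{k\wedge\nu_\lambda}(f\omega_n)$, hence pairwise orthogonal.
\item Since $|f\omega_n|=|f|$, this martingale is bounded by $2\lambda$ and has $L^1$ norm at most $\|f\|_1$. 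Here you discard the trivial case $\|f\|_1>\lambda$, where $\nu_\lambda\equiv0$ and the left side vanishes.
\end{itemize}
Hence the left-hand side is at most $2\lambda\|f\|_1$. No Calder\'on--Zygmund splitting is needed. If you keep yours, the same orthogonality applied to $b\omega_n$ bounds the bad part by $\sum_I|I|^{-1}\bigl|\int_I b_I\omega_n\bigr|^2\le 4\lambda\|b\|_1$. This is exactly the quantity your single-atom estimate produces, with no cross terms at all.
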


\begin{lemma}[Doob's $L^2$ maximal inequality, see \cite{Durr}, Ch. 5]
\label{lem:doob} Let $(M_t,\mathcal{F}_t)_{t=0}^L$ be an $L^2$-martingale on
a probability space. Then 
\begin{equation*}
\Bigl\|\max_{0\le t\le L}\lvert M_t\rvert\Bigr\|_2^2\le 4\lVert
M_L\rVert_2^2. 
\end{equation*}
\end{lemma}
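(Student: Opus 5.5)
The plan is to prove Lemma~\ref{lem:doob} by the classical route: first a weak-type maximal inequality with the refined right-hand side, then integration over levels to pass to $L^2$. Write $M^*:=\max_{0\le t\le L}\lvert M_t\rvert$. Since $(M_t)$ is a martingale and $x\mapsto\lvert x\rvert$ is convex, $(\lvert M_t\rvert,\mathcal F_t)$ is a submartingale by conditional Jensen, so $\lvert M_t\rvert\le\mathbb E(\lvert M_L\rvert\mid\mathcal F_t)$ for every $t\le L$.

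\emph{Step 1 (weak-type bound).} Fix $\lambda>0$ and let $\tau:=\min\{t\le L:\lvert M_t\rvert>\lambda\}$. The event $\{M^*>\lambda\}$ is the disjoint union of the events $A_t:=\{\tau=t\}\in\mathcal F_t$. On $A_t$ we have $\lambda<\lvert M_t\rvert$. Combining this with the submartingale property gives
\begin{equation*}
\lambda\,\mathbb P(A_t)\le\int_{A_t}\lvert M_t\rvert\le\int_{A_t}\lvert M_L\rvert .
\end{equation*}
Summing over $t$ yields
\begin{equation*}
\lambda\,\mathbb P(M^*>\lambda)\le\int_{\{M^*>\lambda\}}\lvert M_L\rvert .
\end{equation*}

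\emph{Step 2 (layer-cake).} Since $M^*\le\sum_t\lvert M_t\rvert\in L^2$, all quantities below are finite. Using $\mathbb E (M^*)^2=\int_0^\infty 2\lambda\,\mathbb P(M^*>\lambda)\,d\lambda$, Step~1 and Fubini give
\begin{equation*}
\mathbb E(M^*)^2\le 2\int_0^\infty\int_{\{M^*>\lambda\}}\lvert M_L\rvert\,d\mathbb P\,d\lambda=2\,\mathbb E\bigl(\lvert M_L\rvert M^*\bigr).
\end{equation*}
By Cauchy--Schwarz,
\begin{equation*}
\mathbb E(M^*)^2\le 2\lVert M_L\rVert_2\lVert M^*\rVert_2 .
\end{equation*}
If $\lVert M^*\rVert_2=0$ there is nothing to prove. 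Otherwise, dividing by the finite quantity $\lVert M^*\rVert_2$ and squaring gives $\lVert M^*\rVert_2^2\le4\lVert M_L\rVert_2^2$.

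The only point needing care is the finiteness of $\lVert M^*\rVert_2$ that justifies the division. Here it is automatic because the time horizon $L$ is finite, so no truncation argument is required. The rest is routine.
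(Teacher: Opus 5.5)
Your proof is correct. It is the standard argument that the paper defers to by citing Durrett: Doob's weak-type inequality via the first-passage decomposition $\{M^*>\lambda\}=\bigsqcup_t\{\tau=t\}$, followed by the layer-cake formula, Fubini and Cauchy--Schwarz. Your observation that $M^*\le\sum_{t=0}^{L}\lvert M_t\rvert\in L^2$ for a finite horizon legitimately replaces the truncation $M^*\wedge n$ used in the textbook proof to justify dividing by $\lVert M^*\rVert_2$.
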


\section{A stopped maximal estimate}

Throughout this section, $a=(a(n))_{n\ge1}$ satisfies \eqref{eq:growth}. For
fixed $\lambda>0$, define 
\begin{align}
X_n&:=X_{a(n)}^{(\lambda)}=\sum_{i=0}^{A(n)-1}\mathbf{1}_{\{\nu_\lambda>i\}}%
\,(f*d_{a(n),i}),  \label{eq:Tlambda-def-a} \\
T_N^{(\lambda)}f&:=\frac1N\sum_{n=1}^N X_n, \qquad
T_\lambda^*f:=\sup_{N\ge1}\lvert T_N^{(\lambda)}f\rvert.
\label{eq:Tlambda-def}
\end{align}
By \cref{lem:Xn-spectrum,lem:stopped-square}, 
\begin{equation}
\lVert X_n\rVert_2^2\le C\lambda\lVert f\rVert_1 \qquad (n\ge1).
\label{eq:Xn-l2-bound}
\end{equation}

The next lemma extracts lacunarity inside a dyadic shell.

\begin{lemma}
\label{lem:lacunary-blocks} Fix $m\ge0$, and let 
\begin{equation*}
\kappa_\delta:=2^{1+\delta}\log 2+2, \qquad R_m:=\bigl\lceil \kappa_\delta
2^{m\delta}\bigr\rceil. 
\end{equation*}
If $\beta\ge1$ and $\beta+R_m\le 2^{m+1}$, then 
\begin{equation}
a(\beta+R_m)\ge 2a(\beta).  \label{eq:lacunary-step}
\end{equation}
Consequently, for each residue class $b\in\{0,1,\dots,R_m-1\}$, the
subsequence 
\begin{equation*}
a(b+jR_m),\qquad j\ge1, 
\end{equation*}
is lacunary with ratio at least $2$ as long as $b+jR_m\le2^{m+1}$. In
particular, the integers 
\begin{equation*}
A(b+jR_m)=|a(b+jR_m)| 
\end{equation*}
are then strictly increasing.
\end{lemma}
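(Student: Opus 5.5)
We telescope the growth condition \eqref{eq:growth} over $R_m$ consecutive steps and use $\log(1+t)\ge t/2$ for $0\le t\le 1$. For $\beta\le k<\beta+R_m$ we have $k^{-\delta}\le 1$, so
\begin{equation*}
\log\frac{a(\beta+R_m)}{a(\beta)}\ge\sum_{k=\beta}^{\beta+R_m-1}\log\bigl(1+k^{-\delta}\bigr)\ge\frac12\sum_{k=\beta}^{\beta+R_m-1}k^{-\delta}.
\end{equation*}
Each index satisfies $k<\beta+R_m\le 2^{m+1}$, hence $k^{-\delta}\ge 2^{-(m+1)\delta}$. Therefore
\begin{equation*}
\log\frac{a(\beta+R_m)}{a(\beta)}\ge\frac12 R_m 2^{-(m+1)\delta}\ge\frac12\kappa_\delta 2^{m\delta}2^{-(m+1)\delta}=\frac{\kappa_\delta}{2^{1+\delta}}.
\end{equation*}
Since $\kappa_\delta=2^{1+\delta}\log2+2$, the right-hand side is at least $\log 2+2^{-\delta}>\log 2$. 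This gives \eqref{eq:lacunary-step}. The constant has slack, so the only thing to check is the lower bound for $\log(1+t)$, which holds on $[0,1]$ by concavity.

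For the consequence, fix a residue class $b$ and indices $j\ge1$ with $b+(j+1)R_m\le 2^{m+1}$. Apply \eqref{eq:lacunary-step} with $\beta=b+jR_m\ge1$; this uses $j\ge1$, which guarantees $\beta\ge1$ even when $b=0$. It gives $a(b+(j+1)R_m)\ge 2a(b+jR_m)$, so the subsequence is lacunary with ratio at least $2$ throughout the admissible range of $j$.

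Finally, if $a'\ge 2a''\ge1$ then $|a'|\ge|a''|+1$, because $2^{|a''|}\le a''<2^{|a''|+1}$ implies $a'\ge 2^{|a''|+1}$. Hence the binary lengths $A(b+jR_m)$ are strictly increasing in $j$.

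I expect no real obstacle. The one point needing care is the boundary bookkeeping: every index used must stay at most $2^{m+1}$, and this is exactly what makes the uniform bound $k^{-\delta}\ge2^{-(m+1)\delta}$ valid.
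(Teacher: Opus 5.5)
Your proposal is correct and follows essentially the same route as the paper. Both arguments telescope the growth condition over $R_m$ steps, use $k<2^{m+1}$ to bound each term below by $2^{-(m+1)\delta}$, apply $\log(1+u)\ge u/2$ on $(0,1]$ with $\kappa_\delta 2^{-1-\delta}\ge\log 2$, and then deduce $|a(\beta+R_m)|\ge|a(\beta)|+1$. The only differences are cosmetic: you take logarithms before bounding the terms rather than bounding the product first, and you explicitly check that $j\ge1$ gives $\beta\ge1$ when $b=0$.
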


\begin{proof}
If $\beta+R_m\le 2^{m+1}$, then by \eqref{eq:growth}, 
\begin{equation*}
a(\beta+R_m) \ge
\prod_{t=\beta}^{\beta+R_m-1}\left(1+\frac1{t^\delta}\right)a(\beta) \ge
\left(1+2^{-(m+1)\delta}\right)^{R_m}a(\beta). 
\end{equation*}
Since $0<2^{-(m+1)\delta}\le1$, we have $\log(1+u)\ge u/2$ for $0<u\le1$.
Hence 
\begin{equation*}
R_m\log\left(1+2^{-(m+1)\delta}\right) \ge \kappa_\delta 2^{m\delta}\cdot
\frac12\,2^{-(m+1)\delta} = \kappa_\delta 2^{-1-\delta} \ge \log 2. 
\end{equation*}
Therefore 
\begin{equation*}
\left(1+2^{-(m+1)\delta}\right)^{R_m}\ge2, 
\end{equation*}
which proves \eqref{eq:lacunary-step}. The last assertion follows because $%
a(\beta+R_m)\ge2a(\beta)$ implies $|a(\beta+R_m)|\ge |a(\beta)|+1$.
\end{proof}

\begin{lemma}
\label{lem:T-weak} Assume \eqref{eq:growth} with $0<\delta<1$. Then, for
every $f\in L^1(\mathbb{I})$ and every $\lambda>0$, 
\begin{equation}
\lvert \{x\in\mathbb{I}:T_\lambda^*f(x)>\lambda\}\rvert \le \frac{C_\delta}{%
\lambda}\lVert f\rVert_1.  \label{eq:T-weak}
\end{equation}
\end{lemma}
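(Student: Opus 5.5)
The plan is to prove an $L^2$ bound of the form $\lVert T_\lambda^*f\rVert_2^2\le C_\delta\lambda\lVert f\rVert_1$; Chebyshev's inequality then gives \eqref{eq:T-weak} immediately. We split the supremum over $N$ into dyadic shells $2^m\le N<2^{m+1}$ and write
\begin{equation*}
T_\lambda^*f\le \sup_{m\ge0}\max_{2^m\le N<2^{m+1}}\frac1N\Bigl|\sum_{n=1}^N X_n\Bigr|
\le \Bigl(\sum_{m\ge0}\Bigl(2^{-m}\max_{2^m\le N<2^{m+1}}\Bigl|\sum_{n=1}^N X_n\Bigr|\Bigr)^2\Bigr)^{1/2}.
\end{equation*}
It therefore suffices to prove, for each $m$,
\begin{equation*}
\Bigl\|\max_{N<2^{m+1}}\Bigl|\sum_{n=1}^N X_n\Bigr|\Bigr\|_2^2\le C R_m\,2^{m+1}\lambda\lVert f\rVert_1 .
\end{equation*}
Multiplying by $2^{-2m}$ and summing over $m$ yields $\sum_m 2^{-m}R_m\lambda\lVert f\rVert_1\lesssim \sum_m 2^{-m(1-\delta)}\lambda\lVert f\rVert_1<\infty$, which is exactly where the condition $\delta<1$ enters.

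To get the per-shell estimate, we split $\{1,\dots,N\}$ into the residue classes $b+jR_m$ for $b=0,\dots,R_m-1$ (writing the class $b=0$ as $j\ge1$). By the triangle inequality and Cauchy--Schwarz,
\begin{equation*}
\max_{N<2^{m+1}}\Bigl|\sum_{n\le N}X_n\Bigr|^2\le R_m\sum_{b}\max_{J}\Bigl|\sum_{j:\,b+jR_m\le \min(N,2^{m+1})} X_{b+jR_m}\Bigr|^2 .
\end{equation*}
Inside a fixed class, \cref{lem:lacunary-blocks} shows that the indices $A(b+jR_m)$ are strictly increasing in $j$, as long as $b+jR_m\le 2^{m+1}$. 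By \cref{lem:Xn-spectrum}, each $X_n$ is $\mathcal{A}_{A(n)+1}$-measurable and satisfies $\mathbb{E}_{A(n)}X_n=0$. Hence the partial sums $M_J=\sum_{j\le J}X_{b+jR_m}$ form a martingale with respect to the filtration $\mathcal{F}_J=\mathcal{A}_{A(b+JR_m)+1}$. Indeed, $A(b+(J+1)R_m)\ge A(b+JR_m)+1$, so the next term has zero conditional expectation given $\mathcal{F}_J$. Its increments are orthogonal, and they have disjoint Walsh spectra as well.

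Doob's inequality (\cref{lem:doob}) then gives
\begin{equation*}
\Bigl\|\max_J|M_J|\Bigr\|_2^2\le4\sum_j\lVert X_{b+jR_m}\rVert_2^2\le 4\,\#\{j\}\,C\lambda\lVert f\rVert_1,
\end{equation*}
using \eqref{eq:Xn-l2-bound}. Summing over $b$ counts every $n\le2^{m+1}$ exactly once, which gives the bound $R_m\cdot 4C\,2^{m+1}\lambda\lVert f\rVert_1$ claimed above.

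The main subtlety is bookkeeping rather than analysis. The maximum over $N$ within the shell must be handled uniformly, which is fine because truncating each residue class at $N$ is just stopping that martingale at a deterministic time. The lacunarity from \cref{lem:lacunary-blocks} is only guaranteed for indices up to $2^{m+1}$, and our sums never go beyond that range. Small $m$ with $R_m>2^{m+1}$ are harmless: either the classes are singletons, or we bound crudely using $N\le R_m$. The constants depend only on $\delta$ through $\kappa_\delta$ and $\sum_m 2^{-m(1-\delta)}$.
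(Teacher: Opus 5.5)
Your proposal is correct and is essentially the paper's proof: residue classes modulo $R_m\asymp 2^{m\delta}$, lacunarity inside each class from \cref{lem:lacunary-blocks}, the martingale structure from \cref{lem:Xn-spectrum} with Doob's inequality, and the summable factor $R_m2^{-m}\lesssim 2^{-m(1-\delta)}$. The differences are cosmetic. You apply Doob once to each whole class in $[1,2^{m+1})$, whereas the paper splits off the initial block $\sum_{n\le 2^m}X_n$ (its $A_m$, handled by orthogonality alone) from the shell tail $B_{m,N}$; and you conclude via $\lVert T_\lambda^*f\rVert_2^2\lesssim\lambda\lVert f\rVert_1$ and Chebyshev rather than the paper's union bound over $m$, which is the same computation.
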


\begin{proof}
Fix $m\ge0$ and $N$ with $2^m\le N<2^{m+1}$. Since 
\begin{equation*}
T_N^{(\lambda)}f=\frac1N\sum_{n=1}^{2^m}X_n+\frac1N\sum_{n=2^m+1}^{N}X_n, 
\end{equation*}
we have 
\begin{equation}
\lvert T_N^{(\lambda)}f\rvert^2 \le \frac{2}{2^{2m}}\lvert
\sum_{n=1}^{2^m}X_n\rvert^2 +\frac{2}{2^{2m}}\lvert
\sum_{n=2^m+1}^{N}X_n\rvert^2 =:A_m+B_{m,N}.  \label{eq:AB-splitting}
\end{equation}
We estimate the two terms separately.

\smallskip \noindent\emph{Estimate of $A_m$.} For $b=0,1,\dots,R_m-1$, set 
\begin{equation*}
J_{m,b}:=\bigl\{j\in\mathbb{N}_0:b+jR_m\in\{1,2,\dots,2^m\}\bigr\}. 
\end{equation*}
Then 
\begin{equation*}
\sum_{n=1}^{2^m}X_n =\sum_{b=0}^{R_m-1}\sum_{j\in J_{m,b}} X_{b+jR_m}. 
\end{equation*}
By Cauchy--Schwarz, 
\begin{equation*}
A_m\le C\frac{R_m}{2^{2m}}\sum_{b=0}^{R_m-1}\lvert \sum_{j\in
J_{m,b}}X_{b+jR_m}\rvert^2. 
\end{equation*}

 For fixed $b$, the spectra of the functions $X_{b+j R_m}$ lie in disjoint dyadic shells $\left[2^{A\left(b+j R_m\right)}, 2^{A\left(b+j R_m\right)+1}\right)$, because $A\left(b+j R_m\right)$ is strictly increasing by Lemma 4.1. Taking $L^1$-norms and using Parseval, %
\cref{lem:Xn-spectrum,lem:stopped-square,lem:lacunary-blocks}, we obtain 
\begin{align*}
\lVert A_m\rVert_1 &\le C\frac{R_m}{2^{2m}}\sum_{b=0}^{R_m-1} \Bigl\|%
\sum_{j\in J_{m,b}}X_{b+jR_m}\Bigr\|_2^2 \\
&= C\frac{R_m}{2^{2m}}\sum_{b=0}^{R_m-1}\sum_{j\in J_{m,b}}\lVert
X_{b+jR_m}\rVert_2^2 \\
&\le C\frac{R_m}{2^{2m}}\sum_{b=0}^{R_m-1}\sum_{j\in J_{m,b}}\lambda\lVert
f\rVert_1 \\
&\le C\frac{R_m}{2^m}\lambda\lVert f\rVert_1.
\end{align*}
Since $R_m\lesssim 2^{m\delta}$, 
\begin{equation}
\lVert A_m\rVert_1\le C2^{-m(1-\delta)}\lambda\lVert f\rVert_1.
\label{eq:Am-est}
\end{equation}

\smallskip \noindent\emph{Estimate of the tail term.} Set 
\begin{equation*}
B_m^*:=\sup_{2^m\le N<2^{m+1}}B_{m,N}. 
\end{equation*}
For each fixed residue class $b\in\{0,1,\dots,R_m-1\}$, list the indices in
the shell $\{2^m+1,\dots,2^{m+1}-1\}$ that are congruent to $b\pmod{R_m}$ in
increasing order: 
\begin{equation*}
n_{b,1}<n_{b,2}<\cdots<n_{b,L_{m,b}}. 
\end{equation*}
By construction, $n_{b,s+1}-n_{b,s}=R_m$, and therefore %
\cref{lem:lacunary-blocks} implies that the integers 
\begin{equation*}
k_{b,s}:=A(n_{b,s})=|a(n_{b,s})| 
\end{equation*}
are strictly increasing. For $1\le t\le L_{m,b}$, define 
\begin{equation*}
M_{b,t}:=\sum_{s=1}^t X_{n_{b,s}}, \qquad M_{b,0}:=0. 
\end{equation*}
For $2^m\le N<2^{m+1}$, there is an integer $t_b(N)\in\{0,\dots,L_{m,b}\}$
such that 
\begin{equation*}
\sum_{\substack{ 1\le s\le L_{m,b} \\ n_{b,s}\le N}}%
X_{n_{b,s}}=M_{b,t_b(N)}. 
\end{equation*}
Hence 
\begin{equation*}
\sum_{n=2^m+1}^{N}X_n=\sum_{b=0}^{R_m-1}M_{b,t_b(N)}, 
\end{equation*}
and another application of Cauchy--Schwarz yields 
\begin{equation*}
B_m^* \le C\frac{R_m}{2^{2m}}\sum_{b=0}^{R_m-1} \sup_{0\le t\le
L_{m,b}}\lvert M_{b,t}\rvert^2. 
\end{equation*}

Fix $b$. By \cref{lem:Xn-spectrum}, each increment $X_{n_{b,s}}$ has
spectrum contained in $[ 2^{k_{b,s}},2^{k_{b,s}+1}) $, so in particular 
\begin{equation*}
X_{n_{b,s}}\in L^2(\mathcal{A}_{k_{b,s}+1})\ominus L^2(\mathcal{A}%
_{k_{b,s}}). 
\end{equation*}
Since $k_{b,s+1}\ge k_{b,s}+1$, the filtration defined by 
\begin{equation*}
\mathcal{F}_{b,0}:=\{\emptyset,\mathbb{I}\}, \qquad \mathcal{F}_{b,t}:=%
\mathcal{A}_{k_{b,t}+1}\qquad (1\le t\le L_{m,b}) 
\end{equation*}
is increasing, $M_{b,t}$ is $\mathcal{F}_{b,t}$-measurable, and 
\begin{equation*}
\mathbb{E}\bigl(X_{n_{b,t+1}}\mid \mathcal{F}_{b,t}\bigr)=0. 
\end{equation*}
Thus $(M_{b,t},\mathcal{F}_{b,t})_{t=0}^{L_{m,b}}$ is an $L^2$-martingale.
By Doob's inequality, 
\begin{equation*}
\Bigl\|\sup_{0\le t\le L_{m,b}}\lvert M_{b,t}\rvert\Bigr\|_2^2 \le 4\lVert
M_{b,L_{m,b}}\rVert_2^2. 
\end{equation*}
The increments are pairwise orthogonal because their spectra lie in disjoint
dyadic shells, so 
\begin{equation*}
\lVert M_{b,L_{m,b}}\rVert_2^2 =\sum_{s=1}^{L_{m,b}}\lVert
X_{n_{b,s}}\rVert_2^2 \le C L_{m,b}\lambda\lVert f\rVert_1 
\end{equation*}
by \eqref{eq:Xn-l2-bound}. Therefore, 
\begin{align*}
\lVert B_m^*\rVert_1 &\le C\frac{R_m}{2^{2m}}\sum_{b=0}^{R_m-1} \Bigl\|%
\sup_{0\le t\le L_{m,b}}\lvert M_{b,t}\rvert\Bigr\|_2^2 \\
&\le C\frac{R_m}{2^{2m}}\sum_{b=0}^{R_m-1}\lVert M_{b,L_{m,b}}\rVert_2^2 \\
&\le C\frac{R_m}{2^{2m}}\sum_{b=0}^{R_m-1}L_{m,b}\lambda\lVert f\rVert_1 \\
&\le C\frac{R_m}{2^m}\lambda\lVert f\rVert_1.
\end{align*}
Hence 
\begin{equation}
\lVert B_m^*\rVert_1\le C2^{-m(1-\delta)}\lambda\lVert f\rVert_1.
\label{eq:Bm-est}
\end{equation}

Finally, \eqref{eq:AB-splitting} implies 
\begin{equation*}
\{T_\lambda^*f>\lambda\} \subset
\bigcup_{m=0}^{\infty}\{A_m+B_m^*>\lambda^2\}. 
\end{equation*}
By Chebyshev's inequality and \eqref{eq:Am-est}--\eqref{eq:Bm-est}, 
\begin{align*}
\lvert \{T_\lambda^*f>\lambda\}\rvert &\le
\frac1{\lambda^2}\sum_{m=0}^{\infty}\bigl(\lVert A_m\rVert_1+\lVert
B_m^*\rVert_1\bigr) \\
&\le \frac{C}{\lambda}\lVert f\rVert_1\sum_{m=0}^{\infty}2^{-m(1-\delta)}
\le \frac{C_\delta}{\lambda}\lVert f\rVert_1,
\end{align*}
because $0<\delta<1$. This proves \eqref{eq:T-weak}.
\end{proof}

\section{Proof of the main theorem}

Define the maximal operators 
\begin{equation*}
\widetilde M f:=\sup_{N\ge1}\lvert \widetilde\sigma_N f\rvert, \qquad
Mf:=\sup_{N\ge1}\lvert \sigma_N f\rvert. 
\end{equation*}
By \eqref{eq:sigma-splitting} and \eqref{eq:max-first-pieces}, 
\begin{equation}
Mf\le 2f^*+\widetilde M f.  \label{eq:M-split}
\end{equation}
We first show that $\widetilde M$ is of weak type $(1,1)$. Fix $\lambda>0
$, and let $T_N^{(\lambda)}f$ be defined by \eqref{eq:Tlambda-def}. On the
set $\{\nu_\lambda=\infty\}$, we have $\mathbf{1}_{\{\nu_\lambda>i\}}=1$ for
every $i$, hence 
\begin{equation*}
T_N^{(\lambda)}f=\widetilde\sigma_N f. 
\end{equation*}
Therefore 
\begin{equation*}
\{\widetilde M f>2\lambda\} \subset
\{\nu_\lambda<\infty\}\cup\{T_\lambda^*f>2\lambda\}. 
\end{equation*}
Using \eqref{eq:dyadic-maximal-weak} and \cref{lem:T-weak}, we obtain 
\begin{equation}
\lvert \{\widetilde M f>2\lambda\}\rvert \le \frac{C}{\lambda}\lVert
f\rVert_1.  \label{eq:Mtilde-weak}
\end{equation}
Combining \eqref{eq:M-split}, \eqref{eq:dyadic-maximal-weak}, and %
\eqref{eq:Mtilde-weak}, we conclude that 
\begin{equation}
\lvert \{Mf>4\lambda\}\rvert \le \frac{C}{\lambda}\lVert f\rVert_1.
\label{eq:M-weak}
\end{equation}
Thus the maximal operator associated with the averages \eqref{eq:sigma-def}
is of weak type $(1,1)$.

We now deduce almost everywhere convergence by a standard density argument.
Let $\mathcal{P}$ be the set of Walsh polynomials. If $P\in\mathcal{P}$,
then there exists $L\in\mathbb{N}$ such that $S_mP=P$ for every $m\ge L$.
Since $a(n)\to\infty$, it follows that 
\begin{equation}
\lim_{N\to\infty}\sigma_NP(x)=P(x) \qquad\text{for every }x\in\mathbb{I}.
\label{eq:poly-conv}
\end{equation}
Now let $f\in L^1(\mathbb{I})$, and choose $P_j\in\mathcal{P}$ with $\lVert
f-P_j\rVert_1\to0$. For each $j$ and for almost every $x\in\mathbb{I}$, 
\begin{equation*}
\limsup_{N\to\infty}\lvert \sigma_Nf(x)-f(x)\rvert \le M(f-P_j)(x)+\lvert
f(x)-P_j(x)\rvert, 
\end{equation*}
because $\sigma_NP_j(x)\to P_j(x)$ by \eqref{eq:poly-conv}. Hence, for every 
$\varepsilon>0$, 
\begin{equation*}
\Bigl\{x:\limsup_{N\to\infty}\lvert \sigma_Nf(x)-f(x)\rvert>2\varepsilon%
\Bigr\}
\subset \{M(f-P_j)>\varepsilon\}\cup\{\lvert f-P_j\rvert>\varepsilon\}. 
\end{equation*}
By \eqref{eq:M-weak} and Chebyshev's inequality, 
\begin{equation*}
\lvert \{M(f-P_j)>\varepsilon\}\rvert+\lvert \{\lvert
f-P_j\rvert>\varepsilon\}\rvert \le \frac{C}{\varepsilon}\lVert
f-P_j\rVert_1. 
\end{equation*}
Letting $j\to\infty$ gives 
\begin{equation*}
\lvert \Bigl\{x:\limsup_{N\to\infty}\lvert
\sigma_Nf(x)-f(x)\rvert>2\varepsilon\Bigr\}\rvert=0 \qquad (\varepsilon>0), 
\end{equation*}
which proves 
\begin{equation*}
\sigma_Nf(x)\to f(x) \qquad\text{for almost every }x\in\mathbb{I}. 
\end{equation*}
The proof of Theorem \ref{thm:main} is complete.

\begin{remark}
The argument does not reach the endpoint $\delta=1$. Indeed, in the shell $%
2^m\le N<2^{m+1}$ one needs a step length $R_m\asymp 2^{m\delta}$ to recover
lacunarity along residue classes. When $\delta=1$, this gives $R_m\asymp 2^m$%
, so the factor $R_m/2^m$ in the estimates of $A_m$ and $B_m^*$ no longer
decays. In particular, the quadratic subsequence $a(n)=n^2$ lies just beyond
the present method.
\end{remark}

\end{document}